\newtheorem{thm}{Theorem}[section]
\newtheorem{lem}[thm]{Lemma}
\newtheorem{cor}[thm]{Corollary}
\newtheorem{prop}[thm]{Proposition}
\theoremstyle{definition}
\theoremstyle{remark}
\begin{document}

\title[Orthogonal Sudoku Squares]{A Quick Construction of Mutually Orthogonal Sudoku Squares}
\author{John Lorch}
\address{Department of Mathematical Sciences\\ Ball State University\\Muncie, IN  47306-0490}
\email{jlorch@bsu.edu} \subjclass{05B15} \keywords{sudoku,
orthogonality, latin square}
\date{December 2010}
\begin{abstract}
For any odd prime power $q$ we provide a quick construction of a complete family of $q(q-1)$ mutually orthogonal sudoku squares of order $q^2$.
\end{abstract}
\maketitle

\def\rk{{\rm rank}}
\def\A{{\mathcal A}}
\def\reg{{\mathcal R}}
\def\P{{\mathcal P}}
\def\F{{\mathbb F}}
\def\fF{{\mathcal F}}
\def\L{{\mathcal L}}
\def\R{{\mathbb R}}
\def\C{{\mathbb C}}
\def\Z{{\mathbb Z}}
\def\B{{\mathcal B}}
\def\Xorth{{\mathcal X}_{{\rm orth}}}
\def\Xnp{{\mathcal X}_{{\rm np}}}
\def\S{{\mathcal S}}
\def\Q{{\mathcal Q}}
\def\pt#1{{\langle  #1 \rangle}}
\def\ds{\displaystyle}
\def\MF{M^{2\times 2}(\F )}
\def\GL{{\rm GL}(2,q)}
\def\Gr{{\rm Gr}(2,\F ^4)}

\section{Introduction}

We present a quick method for producing a complete orthogonal family of sudoku squares of order $q^2$, where $q$ is an odd prime power.
The method, which involves identifying parallel linear
sudoku squares with $2\times 2$ matrices, is simple and
constructive.  Ideas supporting this method originate in
\cite{rB08}, \cite{jL09}, and \cite{jL10}, and may be viewed as a complement to those in \cite{jL09} where the author chose to work in a more specialized setting.

A {\bf sudoku square} is a latin square of order $n^2$ with an additional
requirement called the {\bf subsquare condition}: Upon partitioning
the array into $n\times n$ {\bf subsquares}, each subsquare must contain
every symbol. (See Figure \ref{f:locationex} for an example.) Also, two latin squares are said
to be {\bf orthogonal} if upon superimposition of the two squares there is no repetition of ordered pairs of symbols.
Interest in families of orthogonal latin squares dates from Euler \cite{lE82} and continues to be fueled by
statistical applications and intriguing open problems involving the maximum size of a family of mutually orthogonal latin squares (see \cite{cC01} or \cite{rB08}).

The maximum size of a family of mutually orthogonal sudoku
squares of order $q^2$ is known to be $q(q-1)$. This bound has been achieved
in several ways: In \cite{tV09} a complete family of this size is produced by permuting the rows of an addition
table for the finite field of order $q^2$, thus channeling the classical methods of Bose, Moore, and Stevens (see \cite{rB38} and \cite{aK10}).
Another approach taken in \cite{rB08} rests on three-dimensional
projective geometry: There the authors consider `parallel linear' sudoku
squares that are each characterized by a projective line; the
complete orthogonal family is identified with all lines in a
regular spread that fail to meet a certain regulus. The approach taken in this article, begun in
\cite{jL10} for $q$ prime and considerably refined and strengthened here due to inspiration from
\cite{rB08}, relies on representing
parallel linear sudoku squares by $2\times 2$-matrices with entries in the finite field $\F$ of order $q$.

The structure of the article is as follows. Background on the representation and
orthogonality of parallel linear sudoku squares is given in Section 2; the quick construction of a complete
family of orthogonal sudoku squares follows in Section 3. Terminology used to describe various aspects
of sudoku squares (location, large row, subsquare, etc.) follows that of \cite{rB08}, in particular:
\begin{itemize}
\item  a {\bf large row} consists of
a row of $q$ subsquares, and similarly for {\bf large column}.
\item a {\bf mini row} consists of a row of $q$ locations within a given subsquare, and similarly for {\bf mini column}.
\end{itemize}

\section{Background on Parallel Linear Sudoku Squares: Representation and Orthogonality}

\subsection{Identifying sudoku squares with $2\times 2$ matrices}
Let $\F$ be the finite field of order
$q$ with ${\rm char} ( \F )\ne 2$. Locations within a sudoku square of order $q^2$ can be
identified with the vector space $\F ^4$ over $\F$. Each
location has an address $(x_1,x_2,x_3,x_4)\in \F ^4$ (denoted $x_1x_2x_3x_4$ hereafter), where $x_1$ and $x_3$
denote the large row and column of the location, respectively,
while $x_2$ and $x_4$ denote the mini row and mini column of the
location, respectively. Large rows are labeled in
increasing lexicographic order from top to bottom starting from zero, and within a given subsquare the mini rows are similarly labeled in
increasing lexicographic order from top to bottom. Columns, both large and
mini, are labeled similarly from left to right.\footnote{More on this lexicographic order: Suppose that $q=p^k$ for some prime $p$ and that $\Z _p=\{0, 1,\dots ,p-1\}$ is the corresponding field of order $p$.
We can turn $\Z _p$ into an ordered set by declaring $0<1<\cdots < p-1$. This order on $\Z _p$ is used to impose a lexicographic order on $\F$, which can be viewed as the set of
$k$-tuples with entries $\Z _p$. In the case that $q=p$ (as in Figure \ref{f:locationex} where $q=p=3$) we simply use $0,1,\dots ,p-1$ as labels for large rows, rows within large rows, etc., as one might expect.} (See the asterisked symbol in
Figure \ref{f:locationex}.)

\begin{figure}[h]
      $$ {\small \begin{array}{| c  c  c | c c c | c c c|}
   \hline
      0 & 1 & 2 &  4 & 5 &3 & 8 & 6 & 7 \\
      3 & 4 & 5 &  7 & 8 &6 & 2 & 0 & 1^* \\
      6 & 7 & 8 &  1 & 2 &0 & 5 & 3 & 4 \\ \hline
      1 & 2 & 0 &  5 & 3 &4 & 6 & 7 & 8 \\
      4 & 5 & 3 &  8 & 6 &7 & 0 & 1 & 2 \\
      7 & 8 & 6 &  2 & 0 &1 & 3 & 4 & 5 \\ \hline
      2 & 0 & 1 &  3 & 4 &5 & 7 & 8 & 6 \\
      5 & 3 & 4 &  6 & 7 &8 & 1 & 2 & 0 \\
      8 & 6 & 7 &  0 & 1 &2 & 4 & 5 & 3 \\ \hline
    \end{array}}
      $$ 
       \caption{A parallel linear sudoku square generated by $\pt{1002,0212}\subset \Z _3 ^4$ with
       asterisked symbol in location $0122$.}  
       \label{f:locationex}
       \end{figure}

We say that a sudoku square is {\bf linear} if the collection of locations housing any given symbol
is a {\bf coset} (i.e., an additive translate) of some two-dimensional vector subspace of $\F^4$. Linear sudoku squares come in two flavors:
 If every such coset originates from a {\em single} two-dimensional
subspace, then the square is of {\bf parallel type}, otherwise the
square is of {\bf non-parallel type}.\footnote{We follow \cite{rB08} in using the terminology {\em parallel type} and {\em nonparallel type}.}  In this article we focus only
on linear sudoku squares of parallel type. For example, the sudoku square implied by Figure
\ref{f:locationex} is linear of parallel type, generated by the two-dimensional subspace
$g=\pt{1002,0212}\subset \Z _3^4$.

In order to generate a parallel
linear sudoku square we require that cosets of $g$ intersect
each row, column, and subsquare exactly once:

\begin{prop} \label{p:sudokucharacterization}
A two-dimensional subspace $g$ of $\F ^4$ generates a linear sudoku square $M_g$ (unique up to
relabeling) of parallel
type if and only if $g$ has trivial intersection with
\begin{align*}
g_c&=\pt{1000,0100},\\
g_r&=\pt{0010,0001},  \text{ and }\\
g_{ss}&=\pt{0100,0001}.
\end{align*}
\end{prop}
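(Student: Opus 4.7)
The plan is to translate each of the three sudoku conditions (each row, column, and subsquare contains every symbol exactly once) into a statement about how the two-dimensional subspace $g$ meets the three reference subspaces $g_r$, $g_c$, $g_{ss}$. The first step is to read off the geometric meaning of these subspaces from the addressing convention: two locations $a,b \in \F^4$ lie in the same row precisely when $a-b$ has zero first and second coordinates, i.e.\ $a-b \in g_r$; they share a column iff $a-b \in g_c$; and they lie in the same subsquare iff $a-b \in g_{ss}$. Hence the partitions of $\F^4$ into rows, columns, and subsquares coincide with the coset partitions of $g_r$, $g_c$, and $g_{ss}$ respectively.

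Now given a two-dimensional $g \subset \F^4$, the candidate square $M_g$ assigns a distinct symbol to each coset of $g$, and I must decide when this is a sudoku square. The key observation is that a coset $a+g$ has two distinct entries in a common row iff there exist $v\ne w$ in $g$ with $v-w \in g_r$, which happens iff $g\cap g_r\ne\{0\}$; the analogous statements hold for columns and subsquares. This delivers the "only if" direction immediately: if $M_g$ is a sudoku square then no coset of $g$ can repeat within any row, column, or subsquare, forcing the three trivial-intersection conditions.

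For the converse, assume $g\cap g_r = g\cap g_c = g\cap g_{ss} = \{0\}$. Since $\dim g = \dim g_r = 2$, a dimension count gives $g+g_r = \F^4$, so each coset of $g$ meets each coset of $g_r$ in exactly $|g\cap g_r|=1$ point; that is, each symbol appears exactly once in each row. The same reasoning with $g_c$ and $g_{ss}$ handles columns and subsquares, so $M_g$ satisfies the latin and subsquare conditions and is a parallel linear sudoku square. Uniqueness up to relabeling is automatic, since the construction determines only the partition of $\F^4$ into $q^2$ coset-classes, while the bijection between those classes and the $q^2$ symbols is arbitrary.

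The argument is essentially a dimension count once the three reference subspaces are correctly interpreted, so the only real obstacle is verifying the geometric meaning of $g_r$, $g_c$, and $g_{ss}$ against the lexicographic addressing scheme; this amounts to unwinding the definitions of large/mini row and column and is routine.
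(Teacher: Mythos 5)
Your proposal is correct and follows essentially the same route as the paper: identify the row, column, and subsquare partitions with the coset partitions of $g_r$, $g_c$, $g_{ss}$, and use the fact that two two-dimensional subspaces of $\F^4$ meet trivially if and only if any coset of one meets any coset of the other in exactly one point. You make the dimension count and the uniqueness-up-to-relabeling remark explicit where the paper merely asserts them, but the argument is the same.
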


\begin{proof}
The proof relies on the fact that a pair of two-dimensional subspaces
of $\F ^4$ intersect trivially if and only if any two cosets of
these planes intersect in a single vector in $\F ^4$.
For the forward implication,
observe that the cosets of $g_c$
correspond to the columns of an array. Since $g$ and $g_c$ intersect trivially,
 we know that any coset of $g$
meets any coset of $g_c$ in a single vector, corresponding to a
location in an array. Therefore every coset of $g$ meets the
columns of the array in a single location, and, since each coset
of $g$ houses a constant symbol for $M_g$, we conclude that each
symbol of $M_g$ is contained in each column in a single location.
Similarly, since $g\cap g_r$ and $g\cap g_{ss}$ are both trivial, each symbol of $M_g$ is
contained in each row  and each subsquare in a single location. Therefore $M_g$ is a
sudoku square.

For the reverse implication, if $M_g$ is a sudoku square then each coset of
$g$ must intersect each coset of $g_c$, $g_r$, and $g_{ss}$
in exactly one vector. Therefore $g$ has trivial intersection with both $g_c$, $g_r$, and
$g_{ss}$.
\end{proof}

All linear sudoku squares of parallel type can be represented by
$2\times 2$ matrices. If  $A,B\in \MF$ we let $\left[
\begin{array}{c} A
\\ B
\end{array}\right]$ denote the subspace of $\F ^4$ spanned by the
columns of the matrix $\begin{pmatrix} A \\ B\end{pmatrix}$. Also
let $I$ denote the $2\times 2$ identity matrix. In consideration
of Proposition \ref{p:sudokucharacterization}, we have the
following:

\begin{prop}\label{p:sudmatrep}
A two-dimensional subspace $g$ of $\F ^4$ generates a linear sudoku square of
parallel type if and only if there exists a non-lower triangular
invertible matrix $C$ such that $g=\left[ \begin{array}{c} I\\ C
\end{array} \right]$.
\end{prop}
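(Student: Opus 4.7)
The plan is to invoke Proposition \ref{p:sudokucharacterization} and translate each of the three triviality conditions $g \cap g_c = g \cap g_r = g \cap g_{ss} = \{0\}$ into a condition on a $2 \times 2$ block representation of $g$. First I would pick an arbitrary basis for $g$ and assemble it as the columns of a matrix $\begin{pmatrix} A \\ B \end{pmatrix}$ with $A,B \in \MF$, so that a general element of $g$ has the form $(Av,Bv) \in \F^2 \oplus \F^2$ for $v \in \F^2$. Since $g_r = \pt{0010,0001}$ consists of the vectors whose first two coordinates vanish, the condition $g \cap g_r = \{0\}$ is equivalent to $Av = 0 \Rightarrow v = 0$, i.e., $A$ is invertible. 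Right-multiplying the basis by $A^{-1}$ then puts $g$ into the normalized form $\left[\begin{array}{c} I \\ C \end{array}\right]$ with $C = BA^{-1}$.

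With this normalization fixed, the remaining two conditions become assertions about $C$ alone. A vector of $g$ lies in $g_c$ precisely when $Cv = 0$, so $g \cap g_c = \{0\}$ is equivalent to $C$ being invertible. A vector of $g$ lies in $g_{ss}$ precisely when $v_1 = 0$ and $(Cv)_1 = 0$; since $v_1 = 0$ forces $(Cv)_1 = c_{12} v_2$, the second condition collapses to $c_{12} v_2 = 0$, so $g \cap g_{ss} = \{0\}$ is equivalent to $c_{12} \ne 0$. In the $2 \times 2$ setting this is exactly the statement that $C$ is not lower triangular. The reverse implication is immediate: given $g = \left[\begin{array}{c} I \\ C \end{array}\right]$ with $C$ invertible and $c_{12} \ne 0$, the same three computations show that all three intersection conditions hold, and Proposition \ref{p:sudokucharacterization} then delivers the sudoku square.

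The main obstacle is organizational rather than computational: one must pair each of $g_c$, $g_r$, $g_{ss}$ with the correct block condition, and in particular notice that it is $g_r$ (not $g_c$) that forces invertibility of the top block $A$ and so licenses the normalization to $\left[\begin{array}{c} I \\ C \end{array}\right]$. Once that pairing is set up correctly each intersection reduces to a one-line linear-algebra check, and the serendipitous fact that ``not lower triangular'' amounts to the single entry $c_{12}$ being nonzero is a convenient feature of the $2 \times 2$ case.
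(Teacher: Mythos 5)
Your proposal is correct and follows essentially the same route as the paper: reduce to Proposition \ref{p:sudokucharacterization}, use the trivial intersection with $g_r$ to normalize the top block to $I$, and then identify invertibility of $C$ with the $g_c$ condition and $c_{12}\ne 0$ (non-lower-triangularity) with the $g_{ss}$ condition. Your version just spells out the coordinate computations a bit more explicitly than the paper does.
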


\begin{proof}
Let $A,B\in \MF$ such that $g=\left[
\begin{array}{c} A \\ B  \end{array}\right]$, and assume that $g$
generates a linear sudoku square of parallel type. Then $A$ and
$B$ must be invertible  to guarantee that $g$ has trivial intersection with both $g_r$
and $g_c$, respectively (see Proposition
\ref{p:sudokucharacterization}). Therefore
$$
g=\left[
\begin{array}{c} A \\ B  \end{array}\right]=\left[\begin{pmatrix} A \\ B  \end{pmatrix}A^{-1}\right]
=\left[
\begin{array}{c} I \\ BA^{-1}  \end{array}\right],
   $$
and we choose $C=BA^{-1}$. The matrix $C$ is invertible, and must
also be non-lower triangular or else the second column of
$\begin{pmatrix} I \\ C\end{pmatrix}$ will lie in $g\cap g_{ss}$,
contradicting the fact that $g$ and $g_{ss}$ must have trivial
intersection
(Proposition \ref{p:sudokucharacterization}).

On the other hand, given an invertible, non-lower triangular
matrix $C$, the plane $g=\left[ \begin{array}{c} I\\ C
\end{array} \right]$ satisfies the conditions of Proposition
\ref{p:sudokucharacterization}, so $g$ generates a linear sudoku
square of parallel type.
\end{proof}

\subsection{Orthogonality}
Two sudoku squares are said to be {\bf orthogonal} if, upon
superimposition, each ordered pair of symbols occurs exactly once.
There is a simple geometric condition that characterizes
orthogonality of parallel linear sudoku squares:

\begin{prop} \label{p:orthogcond}
Let $M_{g},M_{h}$ be linear sudoku squares
of parallel type generated by two-dimensional subspaces $g,h$ of $\F ^4$, respectively.
The two squares are orthogonal if and only if $g$ and $h$ have trivial intersection.
\end{prop}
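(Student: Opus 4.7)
The plan is to convert orthogonality into a statement about how cosets of $g$ and $h$ intersect, and then invoke the standard fact (already used in the proof of Proposition \ref{p:sudokucharacterization}) that two two-dimensional subspaces of $\F^4$ meet trivially if and only if each coset of one meets each coset of the other in a single vector.

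First I would fix the identification of symbols: since $M_g$ is parallel linear, its symbols correspond bijectively to the $q^2$ cosets of $g$ (each coset being the set of locations carrying a fixed symbol), and similarly for $M_h$. Thus superimposing $M_g$ and $M_h$ places the ordered pair $(a,b)$ at location $v \in \F^4$ exactly when $v$ lies in the coset of $g$ labeled $a$ and the coset of $h$ labeled $b$. So orthogonality of $M_g$ and $M_h$ is equivalent to saying that for every coset $g+u$ of $g$ and every coset $h+w$ of $h$, the intersection $(g+u)\cap (h+w)$ has exactly one element.

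Next I would justify that this cosetwise condition is equivalent to $g\cap h=\{0\}$. The forward direction is immediate: if some nonzero $v$ lies in $g\cap h$, then $g\cap h$ has at least two elements, but $(g+0)\cap(h+0)=g\cap h$, so the intersection of the two trivial cosets is too large. For the reverse direction, a standard dimension count ($\dim(g+h)=\dim g+\dim h-\dim(g\cap h)=4$ when $g\cap h$ is trivial) shows $g+h=\F^4$, so every pair of cosets meets; and when they meet, the intersection is a translate of $g\cap h=\{0\}$, hence a single point. A cardinality check ($q^2\cdot q^2$ ordered coset pairs, $q^4$ locations) confirms consistency.

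The only potential obstacle is the coset-intersection lemma itself, but it is entirely routine linear algebra over $\F$ and was already relied upon earlier in the paper, so the proof should be short and essentially a transcription of the preceding remark into the orthogonality setting.
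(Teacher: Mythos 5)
Your proposal is correct and follows the same route as the paper: both translate orthogonality into the condition that every coset of $g$ meets every coset of $h$ in exactly one vector, and then invoke the fact that this holds precisely when the two two-dimensional subspaces intersect trivially. The only difference is that you spell out the dimension-count proof of that coset-intersection fact, which the paper simply cites as known from the proof of Proposition \ref{p:sudokucharacterization}.
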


\begin{proof}
The proof is similar to that for Proposition \ref{p:sudokucharacterization}. Let $a,b$  be any two symbols used
in the squares. Since the squares are linear of parallel type there
are cosets $x+g$ and $y+h$  of $g$ and $h$ whose
elements determine the locations of the symbol $a$ in $M_{g}$ and
of $b$ in $M_{h}$, respectively. If $g$ and $h$ have
trivial intersection
we know that $(x+g)\cap (y+h)$
consists of a single vector. Therefore there is exactly one
location that contains both $a$ in $M_{g}$ and $b$ in $M_{h}$,
so when the two squares are superimposed there is precisely one
location housing the ordered pair $(a,b)$. Therefore the squares are
orthogonal. Likewise, if the squares are orthogonal then no two
cosets of $g$ and $h$ can meet in anything other than a single
vector, else an ordered pair of symbols $(a,b)$ will either
appear more than once or not at all. Therefore $g$ and $h$ have trivial intersection.
\end{proof}

Proposition \ref{p:orthogcond} together with Proposition
\ref{p:sudmatrep} implies

\begin{cor}\label{c:orthogcond}
The two-dimensional subspaces $g_1=\left[ \begin{array}{c} I\\ C_1
\end{array} \right]$ and $g_2=\left[ \begin{array}{c} I\\ C_2
\end{array} \right]$ generate orthogonal linear sudoku squares
of parallel type if and only if $C_1,C_2$ satisfy the conditions of
Proposition \ref{p:sudmatrep} and  $\det (C_1-C_2)\ne 0$.
\end{cor}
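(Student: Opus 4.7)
The plan is to combine Proposition \ref{p:orthogcond} with Proposition \ref{p:sudmatrep}: the former reduces orthogonality to the geometric condition $g_1\cap g_2=\{0\}$, and the latter tells us that writing $g_i=\left[\begin{array}{c} I\\ C_i\end{array}\right]$ already encodes the requirement that each $g_i$ generates a parallel linear sudoku square (namely, that $C_i$ is invertible and non-lower triangular). So the only remaining content is to translate ``$g_1\cap g_2=\{0\}$'' into ``$\det(C_1-C_2)\ne 0$''.

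First I would parametrize the two subspaces: every vector in $g_i$ has the form $\begin{pmatrix} v \\ C_i v\end{pmatrix}$ for some $v\in \F^2$, since the columns of $\begin{pmatrix} I \\ C_i\end{pmatrix}$ span $g_i$. Thus a vector in $g_1\cap g_2$ corresponds to a pair $v,w\in \F^2$ with
\[
\begin{pmatrix} v \\ C_1 v\end{pmatrix}=\begin{pmatrix} w \\ C_2 w\end{pmatrix}.
\]
Comparing the top two coordinates forces $v=w$, and then comparing the bottom two coordinates reduces to $(C_1-C_2)v=0$.

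Consequently, $g_1\cap g_2=\{0\}$ if and only if the only solution of $(C_1-C_2)v=0$ is $v=0$, i.e., if and only if $C_1-C_2$ is invertible, which is equivalent to $\det(C_1-C_2)\ne 0$. Chaining this with Propositions \ref{p:orthogcond} and \ref{p:sudmatrep} gives both directions of the corollary at once.

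I do not anticipate any real obstacle here; the argument is a short linear-algebra translation, with the only small care being to note that the top-block identity $I$ in $\begin{pmatrix} I \\ C_i\end{pmatrix}$ is exactly what forces the parameter vectors $v$ and $w$ to coincide, so the condition on the bottom block is a statement purely about $C_1-C_2$ rather than about some more complicated linear combination.
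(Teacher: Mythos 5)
Your proposal is correct and follows the paper's own route: both reduce the corollary to Proposition \ref{p:orthogcond} plus Proposition \ref{p:sudmatrep} and then show that $g_1\cap g_2=\{0\}$ is equivalent to $\det(C_1-C_2)\ne 0$. The only cosmetic difference is that the paper phrases this last step as the nonvanishing of $\det\begin{pmatrix} I & I\\ C_1 & C_2\end{pmatrix}$ followed by a column reduction, whereas you solve the intersection system directly; the content is identical.
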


\begin{proof}
By Proposition \ref{p:orthogcond} the planes $g_1$ and $g_2$ generate orthogonal sudoku squares if and
only if $g\cap h$ is trivial. Observe
$$
g\cap h \text{ trivial }\Longleftrightarrow \det \begin{pmatrix} I & I \\ C_1 & C_2 \end{pmatrix}\ne 0
\Longleftrightarrow \det (C_1-C_2)\ne 0.
   $$
\end{proof}

\section{Quick Construction of a Complete Orthogonal Family}
We now produce a complete mutually orthogonal family of sudoku
squares of order $q^2$ (i.e., we produce a family of size
$q(q-1)$). We begin with a classical result about quadratic
residues:

\begin{lem} \label{l:residues}
If ${\rm char} (\F )\ne 2$ then there exists $\alpha\in \F$ such
that $\alpha$ is a square in $\F$ but $\alpha +1$ is not.
\end{lem}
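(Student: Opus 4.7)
My plan is to argue by contradiction, using an orbit-counting argument under the shift map $\alpha \mapsto \alpha + 1$. First I would set up the standard count for the set of squares $\Sigma = \{x^2 : x \in \F\}$: since $\mathrm{char}\, \F \neq 2$ the squaring map on $\F^\times$ is exactly $2$-to-$1$ (with fibers $\{\pm x\}$), so $|\Sigma| = 1 + (q-1)/2 = (q+1)/2$, where the $1$ accounts for $0 \in \Sigma$.

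Then, assuming for contradiction that $\alpha + 1 \in \Sigma$ for every $\alpha \in \Sigma$, I would consider the shift $\tau : \F \to \F$ defined by $\tau(\alpha) = \alpha + 1$. This is a bijection of $\F$, and the contradictory hypothesis is exactly $\tau(\Sigma) \subseteq \Sigma$; finiteness of $\Sigma$ then forces $\tau|_\Sigma$ to be a bijection of $\Sigma$, so $\Sigma$ decomposes as a disjoint union of $\tau$-orbits. But $\tau^n(\alpha) = \alpha + n \cdot 1_\F$, so each $\tau$-orbit is a coset of the prime subfield $\F_p \subset \F$ and therefore has exactly $p = \mathrm{char}\, \F$ elements. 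Consequently $p$ divides $|\Sigma| = (q+1)/2$. Writing $q = p^k$, one has $q + 1 \equiv 1 \pmod{p}$, so $p \nmid q+1$; and because $p$ is odd, $p \nmid (q+1)/2$ either. This is the desired contradiction.

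I foresee no serious obstacle; the main care needed is a consistent convention about $0$, which I handle by including it in $\Sigma$ so that the count $(q+1)/2$ is exact and the divisibility step is clean. (Note that the $\alpha$ whose existence is forced will automatically be nonzero, since $\alpha = 0$ would give $\alpha + 1 = 1 = 1^2 \in \Sigma$.) A character-theoretic alternative — computing the Jacobi-type sum $\sum_{\alpha \in \F} \chi(\alpha)\chi(\alpha+1) = -1$ for the quadratic character $\chi$ and extracting a positive count of squares $\alpha$ with $\alpha + 1$ a non-square — also works, but seems heavier than necessary for this lemma.
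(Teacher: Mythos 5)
Your proof is correct, and it supplies an argument the paper itself omits: Lemma \ref{l:residues} is stated as ``a classical result about quadratic residues'' with no proof given, and the citation to Raber is only for the sharper quantitative remark that such $\alpha$ occur about $(q-1)/4$ times. Your counting is right: with $0$ included, the set of squares has exactly $(q+1)/2$ elements; if it were closed under $\alpha\mapsto\alpha+1$ it would be a union of cosets of the prime subfield $\F _p$, so $p$ would divide $(q+1)/2$ and hence $q+1$, contradicting $q\equiv 0\pmod p$. You also correctly note the one subtlety of convention, namely that $0$ is counted as a square for the tally but cannot be the witness since $0+1=1$ is a square, so the $\alpha$ produced is automatically nonzero (which matters, since the construction in Theorem \ref{t:construction} needs $\lambda^2=4\alpha$ with the resulting matrices nonsingular). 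Compared with the character-sum computation $\sum_{\alpha}\chi(\alpha)\chi(\alpha+1)=-1$, which is essentially what underlies the $(q-1)/4$ count the paper quotes, your orbit/pigeonhole argument is more elementary and proves exactly the existence statement the paper uses, at the cost of giving no information about how many such $\alpha$ there are.
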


Apparently such values of $\alpha $ are quite common, occurring approximately
$(q-1)/4$ times in a field of order $q$ (see \cite{nR75}).

Select $\alpha$ as in Lemma \ref{l:residues} and pick $\lambda
\in \F$ such that $\lambda ^2= 2^2\cdot \alpha$. Define
$$
\fF= \left\{ \begin{pmatrix} v &w \\ w &\lambda w +v \end{pmatrix}\in
\MF \mid v\in \F \text{ and } w\in \F ^*\right\}.
   $$

\begin{thm}\label{t:construction}
The matrices in $\fF$ generate a family of $q(q-1)$ mutually
orthogonal sudoku squares of order $q^2$.
\end{thm}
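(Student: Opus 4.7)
The plan is to verify the three requirements built into the conclusion: that $|\fF|=q(q-1)$, that every matrix in $\fF$ really does generate a parallel linear sudoku square (via Proposition \ref{p:sudmatrep}), and that any two distinct matrices in $\fF$ give orthogonal squares (via Corollary \ref{c:orthogcond}). The counting is immediate: distinct pairs $(v,w)\in \F\times \F^*$ yield distinct matrices, giving exactly $q(q-1)$ elements in $\fF$.

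For the other two requirements, a single computation does almost all of the work. A matrix $C=\begin{pmatrix} v & w \\ w & \lambda w+v\end{pmatrix}$ in $\fF$ (or more generally, any matrix of the same shape with $v\in \F$, $w\in \F$) has
$$\det C = v(\lambda w+v)-w^2 = v^2+\lambda vw-w^2.$$
So I would first show that the quadratic form $Q(v,w):=v^2+\lambda vw-w^2$ is anisotropic over $\F$ in the following sense: $Q(v,w)=0$ forces $v=w=0$. Completing the square gives $4Q(v,w)=(2v+\lambda w)^2-(\lambda^2+4)w^2$, and since $\lambda^2=4\alpha$ we have $\lambda^2+4=4(\alpha+1)$, which is a non-square in $\F$ because $\alpha+1$ is (and $4$ is a square). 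Hence $4Q(v,w)=0$ would force both $w=0$ and $2v+\lambda w=0$, i.e., $v=w=0$.

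With anisotropy in hand, both remaining steps are routine. For Proposition \ref{p:sudmatrep}: each $C\in \fF$ is non-lower-triangular since $w\in \F^*$, and invertible since $\det C = Q(v,w)\ne 0$ by anisotropy (using $w\ne 0$). For Corollary \ref{c:orthogcond}: given two distinct matrices $C_1,C_2\in \fF$ with parameters $(v_1,w_1)$ and $(v_2,w_2)$, their difference has the same shape with parameters $(v_1-v_2,\,w_1-w_2)$, so
$$\det(C_1-C_2)=Q(v_1-v_2,\,w_1-w_2),$$
which is nonzero by anisotropy because $(v_1-v_2,w_1-w_2)\ne (0,0)$.

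The one step that requires any thought is the anisotropy of $Q$; everything else is bookkeeping that invokes Proposition \ref{p:sudmatrep} and Corollary \ref{c:orthogcond}. The anisotropy in turn is the whole reason for the peculiar choice of $\alpha$ (a square with $\alpha+1$ a non-square) in Lemma \ref{l:residues} and the scaling $\lambda^2=4\alpha$; these are exactly what is needed so that $\lambda^2+4$ is a non-square, which is the hypothesis driving the proof.
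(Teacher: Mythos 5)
Your proposal is correct and follows essentially the same route as the paper: compute $\det C = v^2+\lambda vw - w^2$, complete the square, and use that $\alpha+1$ (hence $\lambda^2+4=4(\alpha+1)$) is a non-square. The only difference is organizational --- by proving $Q$ anisotropic on all of $\F^2\setminus\{(0,0)\}$ you fold the paper's separate case $w_1=w_2$ (where the difference matrix is diagonal with nonzero entries) into the same quadratic-form argument.
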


\begin{proof} We show that the matrices in $\fF$ satisfy the
conditions of Proposition \ref{p:sudmatrep} and Corollary
\ref{c:orthogcond}. It is clear that $\fF$ is of size $q(q-1)$ and that
the elements of $\fF$ are non-lower triangular. It remains to show
that elements of $\fF$ are nonsingular, as are differences of
distinct elements of $\fF$.

Suppose $\begin{pmatrix} v &w \\ w &\lambda w +v
\end{pmatrix}\in \fF$. Observe that

\begin{align*}
\det \begin{pmatrix} v &w \\ w &\lambda w +v
\end{pmatrix} =0& \Longleftrightarrow w^2(1+\lambda ^2 (2^{-1})^2)
\text{ is a square in } \F \\
&\Longleftrightarrow 1+\lambda ^2 (2^{-1})^2 \text{ is a square in } \F
\\
& \Longleftrightarrow 1+\alpha \text{ is a square in } \F ,
\end{align*}
and that the lattermost statement, obtained by completing the square in
either $v$ or $w$, contradicts our choice of $\alpha $. We conclude that elements of
$\fF$ are nonsingular.

Given distinct $\begin{pmatrix} v_1 &w_1 \\ w_1 &\lambda w_1 +v_1
\end{pmatrix}, \begin{pmatrix} v_2 &w_2 \\ w_2 &\lambda w_2 +v_2
\end{pmatrix} \in \fF$, their difference
$$
\begin{pmatrix} v_1-v_2 &w_1-w_2 \\ w_1-w_2 &\lambda( w_1-w_2)
+(v_1-v_2) \end{pmatrix}
   $$ is again an element of $\fF$ if $w_1\ne
w_2$ and is hence nonsingular. If $w_1=w_2$ then $v_1\ne v_2$ and
the difference matrix is diagonal with nonzero diagonal
entries. Therefore the difference matrix is nonsingular.
\end{proof}

\end{document}